\tikzset{snake arrow/.style=
{->,
decorate,
decoration={snake,amplitude=.4mm,segment length=2mm,post length=1mm}},
}
\begin{document}

\textwidth 6.2in
\textheight 7.6in
\evensidemargin.75in
\oddsidemargin.75in

\newtheorem{Thm}{Theorem}
\newtheorem{Lem}[Thm]{Lemma}
\newtheorem{Cor}[Thm]{Corollary}
\newtheorem{Prop}[Thm]{Proposition}
\newtheorem{Rm}{Remark}
\newtheorem{Qu}{Question}
\newtheorem{Def}{Definition}
\newtheorem{Exm}{Example}

\def\a{{\mathbb a}}
\def\C{{\mathbb C}}
\def\A{{\mathbb A}}
\def\B{{\mathbb B}}
\def\D{{\mathbb D}}
\def\E{{\mathbb E}}
\def\R{{\mathbb R}}
\def\P{{\mathbb P}}
\def\S{{\mathbb S}}
\def\Z{{\mathbb Z}}
\def\O{{\mathbb O}}
\def\H{{\mathbb H}}
\def\V{{\mathbb V}}
\def\Q{{\mathbb Q}}
\def\Cn{${\mathcal C}_n$}
\def\CM{\mathcal M}
\def\CG{\mathcal G}
\def\CH{\mathcal H}
\def\CT{\mathcal T}
\def\CF{\mathcal F}
\def\CA{\mathcal A}
\def\CB{\mathcal B}
\def\CD{\mathcal D}
\def\CP{\mathcal P}
\def\CS{\mathcal S}
\def\CZ{\mathcal Z}
\def\CE{\mathcal E}
\def\CL{\mathcal L}
\def\CV{\mathcal V}
\def\CW{\mathcal W}
\def\IC{\mathbb C}
\def\IF{\mathbb F}
\def\IK{\mathcal K}
\def\IL{\mathcal L}
\def\IP{\bf P}
\def\IR{\mathbb R}
\def\IZ{\mathbb Z}

\title{Knot concordances in $S^1\times S^2$ and exotic smooth $4$-manifolds }
\author{Selman Akbulut}
\author{Eylem ZEL\.{I}HA Yildiz}
\thanks{The first author is partially supported by NSF grant DMS 0905917}
\keywords{}
\address{Department  of Mathematics, Michigan State University,  MI, 48824}
\email{akbulut@msu.edu}
\email{ezyildiz@msu.edu}
\subjclass{58D27,  58A05, 57R65}
\date{\today}
\begin{abstract} 

It is known that there is a unique concordance class in the free homotopy class of $S^1\times pt \subset S^1 \times S^2$. The constructive proof of this fact is given by the second author. It turns out that all the concordances in this construction are invertible. The knots $K\subset S^{1}\times S^{2}$ with hyperbolic complements and trivial symmetry group are special interest here, because they can be used to generate absolutely exotic compact 4-manifolds by the recipe given by  Akbulut and  Ruberman. Here we built absolutely exotic manifold pairs by this construction, and show that this construction keeps the Stein property of the $4$-manifolds we start out with. By using this we establish the existence of an absolutely exotic contractible Stein manifold pair, and absolutely exotic homotopy $S^1\times B^3$  Stein manifold pair.
\end{abstract}

\date{}
\maketitle

\setcounter{section}{-1}

\vspace{-.3in}

\section{Introduction} \label{introduction}

\vspace{.1in}

Here we will prove the following theorems which strengthens \cite{ar}:

\begin{Thm}\label{Theorem1}
There is a  pair of compact contractible Stein $4$-manifolds $W_1$, $W_2$, which are homeomorphic but not diffeomorphic to each other. 
\end{Thm}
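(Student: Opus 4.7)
The plan is to combine the Akbulut--Ruberman recipe for converting relative exoticness into absolute exoticness with the invertible concordance construction in $S^1\times S^2$ (the constructive proof referenced in the abstract), keeping track of the Stein condition at every stage. Throughout, the object of interest is a pair of contractible $4$--manifolds $(M_1, M_2)$ built from a cork-type relative exotic pair, modified near the boundary using a carefully chosen knot $K \subset S^1\times S^2$.

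First, I would start from a known pair of compact contractible Stein $4$--manifolds $(M_1, M_2)$ with common boundary $Y$ that are only relatively exotic: there is a homeomorphism $M_1 \to M_2$ restricting to the identity on $Y$, but no diffeomorphism extending the identity (such pairs arise from classical Mazur-type cork constructions, and their Stein structures are standard by Eliashberg's criterion applied to a Legendrian handle diagram). Second, I would invoke the invertible concordance from a knot $K \subset S^1\times S^2$ to $S^1\times\mathrm{pt}$, chosen so that the complement of $K$ is hyperbolic and has trivial symmetry group. These rigidity properties are what the Akbulut--Ruberman construction needs: any potential diffeomorphism $W_1 \to W_2$ between the capped-off manifolds must respect the embedded knot complement, and the combination of hyperbolicity and trivial symmetry forces such a diffeomorphism to restrict to the identity on the original boundary, contradicting relative exoticness.

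Third---and this is the new content---I would verify that the capping procedure preserves the Stein structure. Concretely, the Akbulut--Ruberman cap-off amounts to attaching a cobordism built from the concordance exterior to each $M_i$. The invertibility of the concordance means this cobordism admits a handle decomposition with only $1$- and $2$-handles relative to the bottom, and the attaching circles of the $2$-handles are isotopic in the contact boundary to Legendrian curves whose Thurston--Bennequin framing agrees with the prescribed framing. Applying Eliashberg's characterization of Stein handlebodies, one then obtains a Stein structure on each $W_i$ extending the Stein structure on $M_i$.

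Finally, the resulting $W_1$ and $W_2$ are homeomorphic (since the capping procedure is a topological product away from the concordance, and relative exoticness yields a homeomorphism $M_1 \cong M_2$ that extends across the cap) but not diffeomorphic (by the hyperbolic rigidity argument above). The main obstacle I expect is step three: ensuring that the attaching circles coming from the invertible concordance can be Legendrian-realized with the correct Thurston--Bennequin numbers without disturbing the hyperbolic/asymmetry conditions on the knot $K$. This is precisely where the flexibility in the second author's construction of invertible concordances is needed---one must select the concordance inside the freedom allowed while simultaneously satisfying the Legendrian framing inequality.
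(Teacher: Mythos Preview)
Your proposal is correct and follows essentially the same route as the paper: start from a cork $(W,f)$, attach the invertible homology cobordism $H$ built from the exterior of the concordance to a hyperbolic asymmetric knot $K\subset S^1\times S^2$, and invoke the Akbulut--Ruberman rigidity argument to upgrade relative exoticness to absolute exoticness. The only real divergence is in how the Stein obstacle is resolved: the paper does not appeal to any abstract ``flexibility'' in the choice of concordance, but instead fixes one explicit knot $K$ (hyperbolicity and trivial symmetry verified by SnapPy), writes down the concrete $1$-- and $2$--handle decomposition of $W_1$ and $W_2$ coming from the band description of the concordance complement, and then simply draws both handlebodies in Legendrian position, checking the Thurston--Bennequin inequalities by hand.
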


As in the examples of \cite{ar}, $W_1$ and $W_2$ are related to each other by cork twisting along a cork $(W,f)\subset W_{i}$, $i=1,2$ (e.g. \cite{a}). It follows from the construction that $W_{1}$ and  $W_{2}$ can not be corks (see \cite{ar1}). 

\begin{Thm}\label{Theorem2}\label{Theorem2}
There is a  pair of compact Stein $4$-manifolds $Q_1$, $Q_2$ homotopy equivalent to $S^1\times B^3$, which are homeomorphic but not diffeomorphic to each other. 
\end{Thm}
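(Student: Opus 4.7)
\emph{Proof plan.} The plan is to follow the same strategy as in the proof of Theorem~\ref{Theorem1}, but with a different base manifold. In Theorem~\ref{Theorem1} one begins with a contractible Stein $4$-manifold and modifies it using an appropriately chosen knot $K\subset S^1\times S^2$; for Theorem~\ref{Theorem2} I would instead begin with a Stein $4$-manifold homotopy equivalent to $S^1\times B^3$, the most natural choice being $S^1\times B^3$ itself equipped with the Stein structure obtained by attaching a single Weinstein $1$-handle to $B^4$.

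The input knot $K\subset S^1\times S^2$ is chosen so that (i) $K$ lies in the free homotopy class of $S^1\times pt$, (ii) the complement $(S^1\times S^2)\setminus K$ is hyperbolic, and (iii) $K$ has trivial symmetry group. Such $K$ exists because hyperbolicity and trivial symmetry are generic properties, and the second author's constructive proof produces a plentiful supply of knots in the required free homotopy class. Moreover, the concordance from $K$ to $S^1\times pt$ supplied by that construction is invertible, as stated in the abstract.

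I would then produce $Q_1$ and $Q_2$ by the Akbulut-Ruberman recipe applied to this Stein $S^1\times B^3$, feeding in $K$ and $S^1\times pt$ as the two inputs. Because $K$ is freely homotopic to $S^1\times pt$, a routine Mayer-Vietoris/van Kampen computation shows that both $Q_i$ are homotopy equivalent to $S^1\times B^3$. The topological triviality of the invertible concordance, via Freedman theory, yields a homeomorphism $Q_1\cong Q_2$. Non-existence of a diffeomorphism follows from the standard Akbulut-Ruberman argument: after peripheral analysis, any such would restrict to a diffeomorphism of the hyperbolic complement of $K$ carrying peripheral structure to peripheral structure, hence to a symmetry of $K$, contradicting (iii).

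The main obstacle---and the essential new content beyond Theorem~\ref{Theorem1}---is verifying that this cut-and-paste preserves the Stein property in the presence of a Weinstein $1$-handle. Concretely, I would Legendrian realize $K$ in the contact structure induced on $S^1\times S^2=\partial(S^1\times B^3)$, check that the Thurston-Bennequin framing matches the smooth framing required by the cut-and-paste (possibly after Legendrian stabilization), and confirm that the Stein structure extends across the re-glued cobordism. No new gauge-theoretic input is needed beyond what is already required for Theorem~\ref{Theorem1}, so once the Stein step is secured the argument concludes exactly as in the contractible case.
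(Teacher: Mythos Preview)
Your proposal has a genuine gap at the very first step: the Akbulut--Ruberman recipe (Theorem~\ref{AR}) requires as input a \emph{relatively exotic} pair $(W,f)$, i.e.\ a compact $4$-manifold $W$ together with a self-homeomorphism whose boundary restriction $f:\partial W\to\partial W$ is a diffeomorphism that does \emph{not} extend to a self-diffeomorphism of $W$. The homology cobordism $H$ built from the invertible concordance of $K$ is then glued to $\partial W$, and the two exotic manifolds arise as $W\cup H$ and $W\cup_{f} H$. There is no sense in which ``$K$ and $S^1\times pt$'' serve as two alternative inputs; the knot $K$ only enters in building $H$, and the exoticness is inherited entirely from $f$.

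Taking $W=S^1\times B^3$ therefore cannot work: every self-diffeomorphism of $S^1\times S^2$ extends over $S^1\times B^3$, so no relatively exotic $f$ exists and the recipe produces nothing. The paper instead starts from the ribbon-disk complement $C^4=B^4\setminus N(D)$ of \cite{a1}, which is homotopy equivalent to $S^1$ and carries the anticork involution $f:\partial C\to\partial C$ (the zero--dot exchange of Figure~\ref{c13}); this $f$ is known not to extend smoothly but to extend topologically. One must also verify that $C^4$ is Stein (Figure~\ref{c14}), choose a curve $\mu\subset\partial C$ whose complement is hyperbolic with trivial symmetry, and check that both $\mu$ and $f(\mu)$ can be put in Legendrian position with the required Thurston--Bennequin numbers so that the glued manifolds $Q_1=C\cup H$ and $Q_2=C\cup_{f}H$ are Stein. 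None of this is captured by your plan.
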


Similarly, $Q_1$ and $Q_2$ are related to each other by``anticork'' twisting (e.g. \cite{a}) along an anticork contained in $(W,f)$. The important feature of our theorems is that the manifolds constructed are all Stein. This property enables us to imbed them into closed symplectic manifolds and relate their exoticity to the nonvanishing gauge theory invariants of closed symplectic manifolds (\cite{t}). The proofs of these theorems will be based on:

\begin{Thm}\label{Theorem3} 
Any knot $K$ in $S^1\times S^2$, which is  freely homotopic to $S^1\times pt$, is  invertibly concordant to $S^1\times pt$.
\end{Thm}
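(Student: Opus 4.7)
The plan is to upgrade the constructive concordance from the second author's earlier work into an invertible one by exhibiting a dual concordance and checking that the composite is isotopic rel boundary to the product. Since $K$ is freely homotopic to $S^1\times pt$, its class in $\pi_1(S^1\times S^2)=\Z$ is $\pm 1$, so I would first place $K$ in a standard Kirby picture for $S^1\times S^2$ as a curve meeting the belt sphere of the $0$-framed $2$-handle algebraically once. The constructive proof then produces a finite sequence of elementary moves (crossing changes realized by band/saddle attachments, together with ambient isotopies) carrying $K$ to $S^1\times pt$. Stacking these moves yields a properly embedded annulus $C\subset (S^1\times S^2)\times[0,1]$, Morse with respect to the height function, with one critical point per elementary move.

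Next, to produce a candidate inverse, I would take $C'\subset (S^1\times S^2)\times[1,2]$ to be the time-reversal of $C$, that is, the concordance obtained by performing the same sequence of elementary moves in reverse order, starting at $S^1\times pt$ and ending at $K$. The composite $C\cup C'\subset (S^1\times S^2)\times[0,2]$ is then a concordance from $K$ to itself whose Morse critical points on the height function are paired symmetrically about $t=1$: each saddle of $C$ at time $t<1$ is matched with its geometric dual in $C'$ at time $2-t$.

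The main obstacle is to show that $C\cup C'$ is ambiently isotopic rel boundary to the product annulus $K\times[0,2]$. I would address this by matched pairwise cancellations: each pair consists of a band move immediately followed by its geometric inverse inside a small $4$-ball neighborhood, and a local birth--death modification (in the sense of Cerf theory) eliminates the pair by an ambient isotopy supported in that ball. Iterating from the innermost pair outward, one is left with the product concordance, establishing invertibility. The delicate point --- and where the specific form of the second author's construction really enters --- is arranging that the cancellation balls for the matched pairs can be made disjoint, so that the cancellations can be performed independently without interference between geometrically distant bands. Tracking this disjointness through the Kirby diagram of the construction is the step where the argument must be carried out with care.
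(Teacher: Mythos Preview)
There is a genuine gap in the cancellation step. The symmetric pairing you set up between critical points of $C$ and their mirrors in $C'$ does \emph{not} produce Cerf birth--death pairs: a saddle coming from a band $b$ in $C$ is matched with the saddle coming from the reverse band $\bar b$ in $C'$, and both are index~$1$ critical points of the height function on the surface; likewise the maxima of $C$ are matched with minima of $C'$ (indices $2$ and $0$). Neither matching is an adjacent--index pair, so the Cerf cancellation lemma simply does not apply. More concretely, a band move followed immediately by its dual band is never isotopic rel boundary to a product cylinder: the trace of that pair of saddles on the relevant component has Euler characteristic $-2$, i.e.\ it is a twice--punctured torus, not an annulus. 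So even granting the reordering you flag as delicate, the local model you want to eliminate is not a trivial $1$--parameter family, and no isotopy supported in a ball will remove it.

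The paper sidesteps this by working with the \emph{complement} of the doubled concordance rather than with the surface. One stacks $F$ and $-F$ so that both ends are $S^1\times pt$ (not $K$, as in your stacking), and reads off a handle decomposition of $S^1\times S^2\times I\setminus\nu(F\cup(-F))$: each $0$--handle of the surface carves a $1$--handle, each band contributes a $2$--handle, and the upside--down copy contributes the dual $2$--handles (small $0$--framed meridians to the original $2$--handles) together with $3$--handles. The essential move, which has no analogue in your surface picture, is that sliding over those $0$--framed meridians unlinks each $2$--handle from itself and from the $1$--handles; one is then left with honest cancelling $1/2$ and $2/3$ pairs in the $4$--manifold, so the complement is $S^1\times D^2\times I$ rel ends. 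Finally Waldhausen's lemma on diffeomorphisms of $T^2\times I$ rel boundary pushes this product structure across the tubular neighbourhood of the annulus, yielding the pair diffeomorphism $(S^1\times S^2\times I,\,F\cup(-F))\cong(S^1\times S^2\times I,\,S^1\times pt\times I)$ rel boundary.
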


\section{Background} \label{background}

Let us recall some basic definitions about knot concordances:

\begin{Def}\label{Definition1}
Two knots $K_1$ and $K_2$ are said to be \emph{concordant} if there is a smooth proper embedding of an annulus $F: S^1\times [0,1] \hookrightarrow Y\times[0,1] $, such that its boundary is $\partial{F(S^1\times [0,1])} = K_1 \times 0 \sqcup (-K_2) \times  1 $,  where $-K_2$ is  the knot $K_2$ with the reversed orientation.
\end{Def}

\begin{Def}\label{Definition2}\cite{s} A concordance $F$ between knots $K_1$ and $K_2$ is said to be \emph{invertible} if there is a concordance $F'$ from $K_2$ to $K_1$ such that $F \cup F': S^1\times [0,1] \hookrightarrow Y\times[0,1] $ is the product concordance $K_1 \times [0,1] \subset Y\times[0,1]$. In this case, we say \emph{$K_1$ is invertibly concordant to $K_{2}$,  and  $K_2$ splits $K_1\times [0,1] $}. In particular when $Y = S^3$ and $K_1$ is the unknot then $K_2$ is called \emph{doubly slice}.
\end{Def}

\begin{Def}\label{Definition3} 
An invertible cobordism $X$ from $M$ to $N$ is a smooth
manifold with $\partial X = M \sqcup -N$, such that there is a cobordism $X'$  with
$\partial X'  = N \sqcup -M $ and $X \underset{N}{\sqcup} X' = M \times I $.
\end{Def}
 
Following Proposition follows from computations by Snap{P}y \cite{cdgw}, and  verifications of these computations are due to \cite{dhl} and \cite{hikmot}.

\begin{Prop}\label{Proposition4}
The knot $K\subset S^{1}\times S^{2}$ given in the Figure~\ref{c1} is hyperbolic with trivial symmetry group.
\end{Prop}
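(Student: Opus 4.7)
The plan is to verify the two claims---hyperbolicity of the complement and triviality of its symmetry group---by a certified computer computation with \texttt{SnapPy} \cite{cdgw}, followed by rigorous verification of the output. First I would encode the complement $E(K) = (S^{1}\times S^{2})\setminus \nu(K)$ as a triangulated cusped $3$-manifold that \texttt{SnapPy} can manipulate. Since $S^{1}\times S^{2}$ is presented by $0$-framed surgery on an unknot $U \subset S^{3}$, the knot $K$ together with $U$ forms a two-component link $L \subset S^{3}$, readable off Figure~\ref{c1}, and $E(K)$ is obtained from $S^{3}\setminus L$ by performing a $0$-slope Dehn filling along the cusp corresponding to $U$. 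Feeding this description into \texttt{SnapPy} produces an ideal triangulation and, via Newton's method applied to Thurston's gluing equations, a numerical approximation to a complete finite-volume hyperbolic structure.

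Next, to promote this floating-point structure into a genuine proof, I would invoke the interval-arithmetic package \texttt{HIKMOT} \cite{hikmot}. It inflates the numerical shape parameters into Krawczyk-type intervals and certifies, via a contraction-mapping argument, that Thurston's gluing equations together with the parabolicity conditions at the cusp have a true solution within those intervals. This rigorously establishes the existence of a complete hyperbolic metric of finite volume on $E(K)$, so that $K$ is hyperbolic in the required sense.

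To identify the symmetry group $\mathrm{Sym}(K) = \mathrm{Isom}(E(K))$, I would appeal to Mostow--Prasad rigidity: every self-isometry of $E(K)$ induces a combinatorial self-symmetry of the Epstein--Penner canonical cell decomposition, and conversely. A direct \texttt{SnapPy} computation returns a trivial symmetry group, and I would certify this by running the verified canonical-retriangulation procedure of \cite{dhl}, which uses interval arithmetic on the tilt inequalities to prove rigorously that the combinatorial type of the canonical decomposition is the one \texttt{SnapPy} reports; its only combinatorial self-symmetry being the identity then implies $\mathrm{Sym}(K)$ is trivial.

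The main obstacle is not the algorithmic or combinatorial content, which is entirely automated, but rather the passage from numerical evidence to mathematically certified output. This is exactly what the two verification packages \texttt{HIKMOT} and \cite{dhl} supply, and is the reason the statement can now be asserted as a proposition rather than left as a numerical observation.
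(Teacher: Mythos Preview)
Your proposal is correct and follows essentially the same approach as the paper: the paper's entire justification is that the proposition ``follows from computations by Snap{P}y \cite{cdgw}, and verifications of these computations are due to \cite{dhl} and \cite{hikmot},'' and you have simply spelled out what those verification steps actually do. There is no alternative argument in the paper to compare against.
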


\begin{figure}[ht]
 \begin{center}  
\includegraphics[width=.3\textwidth]{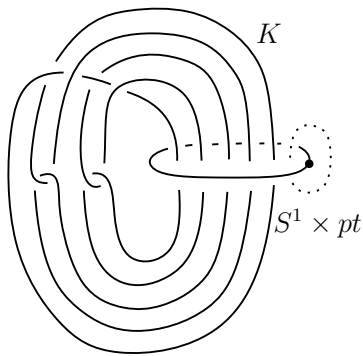}   \caption{The knot $K\subset S^{1}\times S^{2} $ has hyperbolic complement and trivial symmetry group} 
\label{c1} 
\end{center}
\end{figure}

 By using Theorem ~\ref{Theorem3} and Proposition~\ref{Proposition4} we will convert relatively exotic smooth structures on compact $4$-manifolds to absolutely exotic smooth structures, by using the construction given in \cite{ar}. Our goal is to generate Stein exotic examples that can not be obtained from \cite{ar}.

Now recall the construction of absolutely exotic smooth structures on compact 4-manifolds, from relatively exotic structures:

\begin{Thm}[\cite{ar}]\label{AR}
Let  $F : W \rightarrow W$ be self homeomorphism of a compact smooth $4$-manifold, whose restriction to $ \partial W$ is a diffeomorphism which does not extend to a self diffeomorphism of $W$. Then $W$ contains a pair of homeomorphic smooth $4$-manifolds $V$ and $V'$\ homotopy equivalent to $W$, but $V$ and $V'$ are not diffeomorphic to each other.
\end{Thm}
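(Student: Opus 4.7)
The strategy is to convert the failure of $f := F|_{\partial W}$ to extend smoothly over $W$ into an absolute smooth distinction between two codimension-zero submanifolds $V, V' \subset W$, using the rigidity of a hyperbolic knot complement sitting in $\partial W$ as the mechanism that forces any hypothetical diffeomorphism to ``see'' the boundary map $f$.

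First I would select a knot $K \subset \partial W$ whose complement in $\partial W$ is hyperbolic and has trivial symmetry group; such a $K$ is supplied, after arranging a local $S^1 \times S^2$ region in $\partial W$ if necessary, by Proposition~\ref{Proposition4}. By Theorem~\ref{Theorem3}, $K$ is invertibly concordant in $\partial W \times I$ to the knot $f(K) \subset \partial W$ via an annular concordance $C$. Pushing $C$ into a collar $\partial W \times [0,1] \hookrightarrow W$, define
\[
V := W \setminus \nu(C), \qquad V' := W \setminus \nu(C'),
\]
where $C'$ is the invertibility partner of $C$ (equivalently, $C'$ can be realized by applying $F$ to $C$). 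Invertibility of the concordance guarantees that $\nu(C)$ collapses to a product neighborhood of $K$, so $V, V' \simeq W$, giving the required homotopy equivalence. The topological extension $F$ carries the pair $(W, \nu(C))$ to $(W, \nu(C'))$ up to ambient isotopy, producing a homeomorphism $V \approx V'$.

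The heart of the argument is the non-diffeomorphism step. Suppose for contradiction that $\Phi : V \to V'$ is a diffeomorphism. The boundary $\partial V$ contains the hyperbolic knot exterior $\Sigma := \partial W \setminus \nu(K)$, and one arranges (after isotopy) that $\Phi$ respects this decomposition. Mostow rigidity pins down $\Phi|_\Sigma$ up to isotopy as an isometry, and the trivial symmetry group assumption collapses the isometry group to a single class, forcing $\Phi|_\Sigma$ to be the restriction of the map induced by $f$. Filling the solid tori $\nu(K)$ and $\nu(f(K))$ back in and extending $\Phi$ across them yields a smooth self-diffeomorphism $\widetilde{\Phi} : W \to W$ whose boundary restriction is isotopic to $f$, contradicting the standing hypothesis.

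The main obstacle is this last rigidity step: one must argue that a smooth $\Phi$ is genuinely constrained, not merely on the hyperbolic piece $\Sigma$ but also across the re-glued solid tori, so that the resulting $\widetilde{\Phi}$ is smooth and its boundary restriction is \emph{isotopic to $f$} (rather than to some other boundary diffeomorphism that might conceivably extend smoothly over $W$). This is precisely where both features of Proposition~\ref{Proposition4}---hyperbolicity and trivial symmetry group---must be used in tandem, and also where one needs the invertibility of the concordance in Theorem~\ref{Theorem3} to control the framing data along the tori being re-inserted.
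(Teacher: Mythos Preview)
Your construction of $V$ and $V'$ is not the one used in \cite{ar} (and recapitulated in this paper), and as written it does not work. You place the hyperbolic knot $K$ directly in $\partial W$ and then carve out a concordance annulus from a collar; but Proposition~\ref{Proposition4} produces $K$ in $S^1\times S^2$, not in $\partial W$, and Theorem~\ref{Theorem3} produces an invertible concordance from $K$ to $S^1\times pt$ inside $S^1\times S^2\times I$, not a concordance from $K$ to $f(K)$ inside $\partial W\times I$. There is no mechanism here for ``arranging a local $S^1\times S^2$ region in $\partial W$,'' and removing $\nu(C)$ from $W$ does not yield a manifold homotopy equivalent to $W$; invertibility of a concordance says nothing about the homotopy type of a single complement.

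The actual construction is the one in the Lemma following Corollary~\ref{concordance complement}: one chooses a framed curve $\gamma\subset M=\partial W$, forms the invertible homology cobordism
\[
H=\bigl(M\times I-\gamma\times D^2\times I\bigr)\cup_\phi\bigl(S^1\times S^2\times I-\nu(C)\bigr),
\]
and sets $V=W\cup_M H$ and $V'=W\cup_{M,f} H$. Invertibility of $C$ makes $H$ an invertible cobordism, so $V\cup_N H'\cong W$, which is why $V,V'\subset W$ and why they are homotopy equivalent to $W$. The hyperbolic, asymmetric piece $S^1\times S^2-\nu(K)$ now sits in the \emph{new} boundary $N=\partial V=\partial V'$ as a JSJ piece; Mostow rigidity plus trivial symmetry forces any diffeomorphism $V\to V'$ to be the identity on $N$ up to isotopy, and then capping with $H'$ produces a self-diffeomorphism of $W$ restricting to $f$ on $\partial W$, the desired contradiction. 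Your outline of the rigidity step is in the right spirit, but it must be run on $N$, not on $\partial W$, and with the correct $V,V'$.
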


The construction of \cite{ar} relies on finding a knot in $S^3$ which is doubly slice, hyperbolic, and with no symmetry. Here we will remind the original construction, and show that we can use invertible knot concordances in $S^1 \times S^2$ between a knot $K$ and  $S^1\times pt$, where $K$ is a hyperbolic knot with trivial symmetry group, splitting the concordance. 

\vspace{.03in}

\begin{Lem}

Let $K$ be a  knot in $S^1\times S^2$, which is  freely homotopic to $S^1\times pt$ with exterior  $X$. Then $H_{*}(X) = H_{*}(S^1\times D^2) $. 

\end{Lem}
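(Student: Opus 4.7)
The plan is to compute $H_*(X)$ from the long exact sequence of the pair $(S^1\times S^2, X)$, using excision to identify the relative homology. Excision on the interior of the tubular neighborhood $\nu(K)\cong S^1\times D^2$ gives
\[
H_n(S^1\times S^2, X) \;\cong\; H_n(\nu(K),\partial\nu(K)) \;\cong\; H_n(S^1\times D^2, T^2),
\]
and I would compute this latter group directly from the pair $(S^1\times D^2, T^2)$: the longitude of $T^2$ hits a generator of $H_1(S^1\times D^2)\cong\Z$ while the meridian dies, which forces $H_n(S^1\times D^2, T^2)\cong\Z$ for $n=2,3$ and zero otherwise.

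Feeding this into the long exact sequence of $(S^1\times S^2, X)$ together with $H_*(S^1\times S^2)=(\Z,\Z,\Z,\Z)$, the entire problem collapses onto identifying the two inclusion-induced maps
\[
H_n(S^1\times S^2)\longrightarrow H_n(S^1\times S^2, X) \qquad (n=2,3).
\]
The $n=3$ map is the restriction of fundamental classes and is manifestly an isomorphism since the orientation of $S^1\times S^2$ restricts to the standard orientation of $\nu(K)$.

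The crucial step, and the only place where the hypothesis is used, is showing the $n=2$ map is also an isomorphism. Geometrically, the composition with excision sends a surface class $[\Sigma]\in H_2(S^1\times S^2)$ to the relative 2-cycle $[\Sigma\cap\nu(K)]$, which under the identification $H_2(S^1\times D^2,T^2)\cong\Z$ (generated by a meridian disk) reads off the algebraic intersection number $[\Sigma]\cdot K$. Applied to the generator $[pt\times S^2]$, this gives $[pt\times S^2]\cdot K$. Because $\pi_1(S^1\times S^2)\cong\Z$ is abelian, free homotopy classes of loops coincide with $H_1$-classes, so the hypothesis that $K$ is freely homotopic to $S^1\times pt$ forces $[K]$ to be the generator of $H_1(S^1\times S^2)$, whence $[pt\times S^2]\cdot K=\pm 1$. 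Thus the $n=2$ map is $\pm\mathrm{id}$.

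With both connecting maps identified as isomorphisms, the long exact sequence pinches off immediately to give $H_0(X)\cong H_1(X)\cong\Z$ and $H_2(X)=H_3(X)=0$, matching $H_*(S^1\times D^2)$. The main obstacle in the argument is really only the intersection-number interpretation of the $H_2$ map; once that is set up, the remainder is bookkeeping.
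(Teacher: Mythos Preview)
Your argument is correct. The paper's own proof is the single sentence ``Follows from Mayer--Vietoris sequence,'' which is essentially the same computation you carry out: decomposing $S^1\times S^2 = X \cup \nu(K)$ along $T^2$ and chasing the resulting exact sequence, with the hypothesis on $[K]\in H_1$ entering at exactly the same point (to see that the sphere class $[pt\times S^2]$ is detected by a meridian disk of $K$). Your use of the long exact sequence of the pair together with excision is formally a different tool but encodes the identical information as Mayer--Vietoris here, so there is no genuine difference in approach---you have simply written out in full what the paper leaves as a one-liner.
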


\begin{proof} Follows from Mayer-Vietoris sequence.
\end{proof}

\begin{Cor}\label{concordance complement}
Let $K_1$, $K_2$ be knots in the free homotopy class of the $S^1\times pt$ in $S^1\times S^2$, and  $C$ be a concordance between $K_1$ and $K_2$, then $H_*(S^1\times S^2\times I - \nu(C)) = H_*(S^1 \times D^2 \times I) $ and generated by the $S^1\times pt$.
\end{Cor}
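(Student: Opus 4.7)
My plan is to imitate the Mayer--Vietoris computation of the preceding lemma, one dimension up. I set $M := S^1 \times S^2 \times I$, $W := M - \mathrm{int}\,\nu(C)$, and $T := W \cap \nu(C)$. Since $C$ is an oriented annulus with $H^2(C;\Z)=0$, its normal bundle in the oriented $4$-manifold $M$ is trivial, so $\nu(C) \cong S^1 \times I \times D^2 \simeq S^1$ and $T \cong S^1 \times I \times S^1 \simeq T^2$. The homology of $M \simeq S^1\times S^2$, of $\nu(C) \simeq S^1$, and of $T \simeq T^2$ is thus fully known, and the Mayer--Vietoris sequence $\cdots \to H_n(T) \to H_n(W) \oplus H_n(\nu(C)) \to H_n(M) \to H_{n-1}(T) \to \cdots$ reduces the problem to identifying three connecting maps $\delta$.

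Using $H_1(T) = \Z\mu \oplus \Z\lambda$ with $\mu$ the meridian and $\lambda$ a longitude of $C$, and $H_2(T) = \Z[\mu \times \lambda]$, the maps I expect are the following. First, $\delta : H_3(M) \to H_2(T)$ should be an isomorphism: a generic slice $S^1 \times S^2 \times \{t\}$ meets $C$ in an algebraic longitude by the free homotopy hypothesis, so its $\nu(C)$-piece is a tubular neighborhood whose boundary represents the generator of $H_2(T)$. Second, $\delta : H_2(M) \to H_1(T)$ should send $[\mathrm{pt} \times S^2]$ to $\pm[\mu]$, since $\mathrm{pt} \times S^2$ meets $C$ algebraically once (being dual in $M$ to the longitude class that $C$ represents). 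Third, $\delta : H_1(M) \to H_0(T)$ should vanish, because the generator $[S^1 \times \mathrm{pt}]$ admits a representative disjoint from $C$ by free homotopy.

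Once the three $\delta$'s are in hand, the sequence collapses mechanically. In degrees $\geq 2$ one reads off $H_3(W) = H_2(W) = 0$, while in degree $1$ the sequence reduces to a short exact sequence $0 \to \Z\lambda \to H_1(W)\oplus \Z \to \Z \to 0$ whose middle group must therefore be $\Z^2$, giving $H_1(W) \cong \Z$; chasing back through the maps identifies the generator as $[S^1 \times \mathrm{pt}]$ pushed off $C$. Connectedness of $W$ gives $H_0(W) = \Z$, and altogether $H_*(W) \cong H_*(S^1\times D^2\times I)$ with the asserted generator. The one step that deserves care is the computation of $\delta : H_3(M) \to H_2(T)$, which is where the free-homotopy hypothesis on $C$ is essential; everything else is routine.
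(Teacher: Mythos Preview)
Your proof is correct and follows the same Mayer--Vietoris route the paper has in mind: the corollary is stated immediately after the lemma $H_*(S^1\times S^2 - \nu(K)) = H_*(S^1\times D^2)$, whose one-line proof is ``Mayer--Vietoris,'' and the corollary is meant to be the verbatim four-dimensional analogue you have written out.

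One small correction of emphasis: you say the free-homotopy hypothesis is essential for $\delta\colon H_3(M)\to H_2(T)$, but in fact that map is an isomorphism for \emph{any} annular concordance $C$. The level set $C_t = C\cap(S^1\times S^2\times\{t\})$ always represents the generator of $H_1(C)\cong\Z$, since $C_t$ together with $K_1$ bounds the sub-annulus of $C$ below height $t$; hence $[C_t\times\mu]$ generates $H_2(T)$ regardless of the homotopy class of $K_i$. The place where the hypothesis genuinely enters is your second connecting map $\delta\colon H_2(M)\to H_1(T)$: you need $[K_i]=[S^1\times pt]$ in $H_1(S^1\times S^2)$ to conclude that $pt\times S^2$ meets $C$ with algebraic intersection $\pm 1$, giving $\delta[pt\times S^2]=\pm[\mu]$. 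Swap the attribution and the argument is clean.
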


Note that a tubular neighbourhood $\nu(C)$ of $C$ in $S^1\times S^2 \times I$ is an embedding of $S^1\times D^2 \times I$ in $S^1\times S^2 \times I$, where the image of $S^1\times 0 \times I$ is $C$. The image of ${pt}\times \partial D^2 \times t$ is called a meridian of $C$, then a meridian of $C$ bounds a disk in the tubular neighbourhood of $C$, and it is homologous to zero in the exterior. Similarly we call the image of $S^1\times pt \times t$ as a longitude. 

\vspace{.05in}

The following lemma is adapted from \cite{ar}. We restate it here in a slightly different way to show that we can use appropriate concordances of knots in $S^1\times S^2$ instead of in $S^3$. The only difference here is that the gluing map $\phi $ identifies meridian to meridian not to longitude. 

\newpage

\begin{Lem}
Suppose that $\gamma$ is a framed knot in a closed $3$-manifold $M$, and $C$ is an invertible concordance from the $S^1\times pt$ to the knot $K$ in $S^1\times S^2 \times I$. Define
$$ H = \left( M \times I - \left( \gamma \times D^2 \times I\right) \right) \underset{\phi}{\bigcup} \left( S^1 \times S^2 \times I -  \left( C \times D^2\right) \right)  $$
where $\phi: T^2 \times I \to T^2 \times I$ is a diffeomorphism which sends the meridian of $K$ to the meridian of the knot $\gamma$. Then $H$ is an invertible homology cobordism from $M$ to  $N$. If $\pi_1(S^1\times S^2\times I - C\times D^2) \cong \Z$, then the inclusion $\iota : M \hookrightarrow H$ induces an isomorphism on fundamental groups.
\end{Lem}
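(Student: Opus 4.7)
The plan is to (i) identify the two boundary $3$-manifolds of $H$, (ii) apply Mayer--Vietoris with Corollary~\ref{concordance complement} to show $H$ is a homology cobordism, (iii) build a reverse piece $H'$ from the invertible companion $C'$ to see that $H$ itself is invertible, and (iv) run van Kampen under the extra $\pi_1$ hypothesis. For (i), at $t=0$ the slice of $C$ is $S^1\times pt$ with complement $S^1\times D^2$ in $S^1\times S^2$, so the meridian-to-meridian gluing of this solid torus onto $M-\nu(\gamma)$ performs the Dehn filling that reconstructs $M$. At $t=1$ the slice is $K$, and the same recipe produces $N:=(M-\nu(\gamma))\cup_{\phi} X_K$ with $X_K=S^1\times S^2-\nu(K)$.

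For (ii), I would apply Mayer--Vietoris to
$$H = \bigl(M\times I - \gamma\times D^2\times I\bigr)\cup\bigl(S^1\times S^2\times I - C\times D^2\bigr)$$
with overlap $T^2\times I$. By Corollary~\ref{concordance complement} the second piece has the homology of $S^1\times D^2\times I$, generated by the longitude, while the meridian of $K$ is nullhomologous; hence under the meridian-to-meridian gluing $\phi$, the homological contribution of this piece is indistinguishable from that of the tube $\gamma\times D^2\times I$ which was removed from $M\times I$. The Mayer--Vietoris sequence for $H$ therefore matches the one computing $H_*(M\times I)$, giving $H_*(H)\cong H_*(M)$. The same reasoning applied to $X_K$ using the first lemma shows $H_*(N)\cong H_*(M)$, and both inclusions $M,N\hookrightarrow H$ are homology equivalences.

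For (iii), invoke the companion $C':K\rightsquigarrow S^1\times pt$ with $C\cup_K C' = (S^1\times pt)\times I$, and form $H'$ by the same recipe; it is a cobordism from $N$ to $M$. In $H\cup_N H'$, the two $M$-side pieces glue along $M-\nu(\gamma)$ into $(M-\nu(\gamma))\times[0,2]$, while the two concordance-complement pieces glue along $X_K$ to
$$S^1\times S^2\times[0,2] - (C\cup C')\times D^2 \;=\; S^1\times S^2\times[0,2] - (S^1\times pt)\times[0,2]\times D^2 \;\cong\; S^1\times D^2\times[0,2].$$
The meridian-to-meridian regluing now refills the removed tube, producing $M\times[0,2]\cong M\times I$, so $H$ is invertible.

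Finally, under the assumption $\pi_1(S^1\times S^2\times I - C\times D^2)\cong\Z$, van Kampen gives
$$\pi_1(H) = \pi_1(M-\nu(\gamma)) \ast_{\pi_1(T^2)} \Z.$$
Since this $\pi_1$ is abelian, it coincides with $H_1$ of the concordance complement, so by Corollary~\ref{concordance complement} the $\Z$ is generated by the longitude and the meridian of $K$ is nullhomotopic. The two gluing relations then read (meridian of $\gamma$) $= 1$ and (longitude of $\gamma$) $=$ generator; the first is exactly the Dehn-filling relation taking $\pi_1(M-\nu(\gamma))$ onto $\pi_1(M)$, while the second merely names the resulting class, so $\iota_\ast\colon\pi_1(M)\to\pi_1(H)$ is an isomorphism. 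The delicate point throughout is that the meridian-to-meridian identification faithfully reproduces a Dehn filling on both $H_\ast$ and $\pi_1$, which is precisely what Corollary~\ref{concordance complement} and the $\pi_1\cong\Z$ hypothesis arrange.
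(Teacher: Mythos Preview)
The paper does not actually supply a proof of this lemma: it is stated without proof, with the remark that it is adapted from \cite{ar} (the only change being that $\phi$ matches meridian to meridian rather than meridian to longitude). Your argument is correct and is precisely the expected one---Mayer--Vietoris together with Corollary~\ref{concordance complement} for the homology statement, stacking the companion concordance $C'$ for invertibility, and van Kampen with the $\pi_1\cong\Z$ hypothesis for the fundamental-group statement---so it agrees with the approach of \cite{ar} that the paper is invoking. One small cosmetic point: in step (iv) the pushout $\pi_1(M-\nu(\gamma))\ast_{\pi_1(T^2)}\Z$ is not an amalgamated free product in the strict sense since $\pi_1(T^2)\to\Z$ is not injective, but your computation of the resulting quotient is correct regardless.
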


\section{Proof of Theorem 3 } \label{invertible}

\begin{proof}[Proof of Theorem~\ref{Theorem3}]

 Start with a knot $K \subset S^1\times S^2$ in the concordance class of $S^1\times pt$. Built a concordance $F$ between $K$ and $S^1\times pt$ as it is explained in \cite[Theorem 2]{y}. $F$ is properly embedded in $S^1 \times S^2\times I$, the last coordinate is the time, and we have $S^1\times S^2$ at every level. From the construction we see the knot $K$ at the top level, and we perform genus zero cobordism, i.e. we attach bunch of bands $\{b_i\}_{i=1}^{m}$, turning $K$ to $S^1\times pt$ union disjoint unknots $\{u_i\}_{i=1}^{m}$ linking $S^{1}\times pt$. For $1 > t_i > t_{i-1}>0 $, following levels are depicted partially in Figure~\ref{c2}

 \begin{itemize}
 
 \item $F \cap (S^1\times S^2 \times 1) = K $
 
 \item $F \cap (S^1\times S^2 \times t_i) = K \cup b $ 
 
 \item $F \cap (S^1\times S^2 \times t_{i-1}) = u \sqcup (S^1\times pt)$
 
 \item $F \cap (S^1\times S^2 \times 0 ) = S^1 \times pt $

 \end{itemize}
 \begin{figure}[ht] \begin{center}  
\includegraphics[width=.57\textwidth]{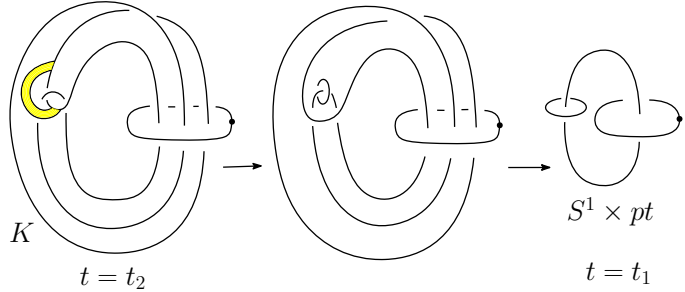}
\caption{The construction of the concordance} 
\label{c2} 
\end{center}
\end{figure} 
 \noindent where each $u_i =\partial D^2$ is an unknot in $S^{1}\times S^{2}$ which is the boundary of the corresponding $0$-handle of $F$, and $b_{i}$ represents a band which is a $1$-handle attached to the surface. $F'$ in the Definition~\ref{Definition2} is $-F$ here.

\vspace{.05in}

Next will be to construct a handlebody decomposition for the concordance complement  $S^1\times S^2\times I - \nu(F\cup (-F))$ from the handlebody decomposition of the surface $F$. For a detailed discussion of handlebody decomposition of surface complement in $4$-manifolds one can consult Chapter 1.4 of \cite{a}. To construct the complement of $F$ in $S^1\times S^2 \times I $ start from the bottom. First we see complement of $S^1 \times pt \times [0,t]$ in $S^1 \times S^2 \times [0,t] $ which is $S^1 \times D^2 \times [0,t]$. Then as $t$ increase,  $0$-handles of the surface $F$ appears, in the complement which corresponds carving properly embedding of disks from $B^4$. So we have connected sum of $m+1$ copies  of $S^1\times B^3$. At the last step the complement gains $m$ many $4$-dimensional $2$-handles for the $1$-handles $b_i$ of the surface, as in the middle picture of  Figure~\ref{c3}.  Alternatively the reader should compare this to \cite{y}, where the cobordism was constructed from top to bottom. For example, Figure~\ref{c3} describes the concordance from the Mazur knot to $S^{1}\times pt$

\vspace{.05in}

To double the concordance complement, we attach upside-down handles along the dual of the $2$-handles (with zero framing), and dual $3$-handles as upside-down $1$-handles. We use the dual $2$-handles to get rid of the self-linking of the original $2$-handle, therefore ending up with cancelling $1/2$ and $2/3$ handle pairs. After cancellations the complement becomes a product. In Figure~\ref{c3}, reader can verify that sliding over the dual $2$-handle ($0$-framed little circle linking the $2$-handle) will make the $2$-handle trivially link the $1$-handle. 

\begin{figure}[ht] \begin{center}  
\includegraphics[width=.55\textwidth]{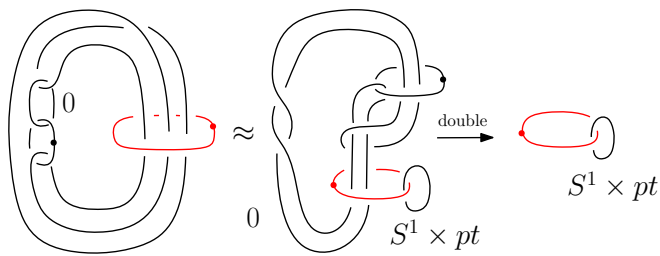}
\caption{The Construction of the concordance complement}
\label{c3}
\end{center}
\end{figure} 

All these cancellations happens away from the endpoints \\ $S^1\times S^2\times \{0\} - \nu(S^1\times \{pt\}) $ and  $S^1\times S^2\times \{1\} - \nu(S^1\times \{pt\})$ provided  attaching circles of the two handles are away from the $S^1\times pt$. Note that anything links to $S^1\times pt$ can be unlinked by sliding over the  middle (red)  $1$-handle.
So far we have constructed a diffeomorphism from $S^1 \times S^2\times I - \nu(F\cup (-F))$ to  $S^1 \times S^2\times I - \nu(S^1\times pt\times I) = S^1\times B^2 \times I $, which is identity near the endpoints. Next, we prove that 
this diffeomorphism extends to self diffeomorphism of $S^1 \times S^2\times I$ which takes the surface $F\cup -F $ to the product  $S^1 \times \{pt\} \times I$.

The diffeomorphism above induces a boundary diffeomorphism on $\partial (S^1\times B^2 \times I)= S^1 \times S^1 \times I \cup (S^1\times B^2 \times 0  \;\sqcup \;S^1\times B^2 \times 1) $. When we restrict this to the partial boundary we get a diffeomorphism $\tau: T^2\times I \rightarrow T^2\times I$ which is identity near $T^2\times \{0,1\}$. By Lemma 3.5 of Waldhausen \cite{w}, $\tau$ is isotopic to the identity map rel boundary hence it extends to tubular neighbourhoods of the surfaces. Hence we have a self diffeomorphism of $S^1 \times S^2\times I$ taking $\nu(S^1\times pt \times I)$ to  
$\nu(F\cup (-F))$. So diffeomorphic complements uniquely determine the surfaces. \end{proof}

Theorem $3$ is related to Light Bulb theorems, that is after doubling the concordance, we have a surface in $S^1\times S^2 \times I$ which is bounded by $S^1\times pt \times i$ $(i=0,1)$. By capping both boundaries with $D^2 \times S^2$ s, we get a surface in $S^2\times S^2$ intersecting $pt\times S^{2}$ at one point, then apply \cite{l} and \cite{g}. The advantage of this construction is, it gives  a rel boundary diffeomorphism  $(S^{1}\times S^{2}\times I, F\cup -F) \to (S^{1}\times S^{2} \times I, S^{1}\times pt\times I)$.

\vspace{.03in}

By applying the technique of  Section~\ref{invertible}  to the curve $K\subset S^{1}\times S^{2}$ of Figure~\ref{c1}, we get the  Figure~\ref{c4},  where the curve $K$ now looks like the standard linking circle to the $1$-handle (dotted curve in the figure).

\begin{figure}[ht]  
\begin{center}  
\includegraphics[width=.23\textwidth]{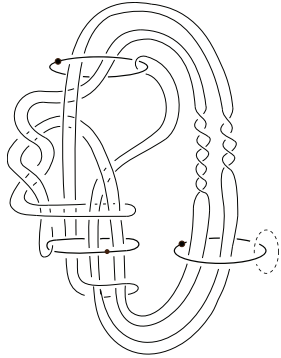}
\caption{The concordance complement in $S^{1}\times S^{2}\times I$}
\label{c4}
\end{center}
\end{figure}
In Figure~\ref{c4} the concordance from $S^{1}\times pt$  to K is fully visible (i.e. the handles of the complement of $K$ are visible). Clearly cancelling the handles of Figure~\ref{c4} gives Figure~\ref{c1} (the dotted circle corresponds to $K$). 


\subsection{Proof of Theorem 1} First we proceed as in \cite{ar}, i.e. glue the homology product cobordism $H$ obtained from Figure~\ref{c4} to the boundary of the cork $W$ (as discussed in Section~\ref{background}). From this we get a new contractible manifold $W_1$ containing $W$, such that performing the cork twisting $W\subset W_1$ gives us an absolutely exotic copy $W_2$ of $W_1$.  Recall from Section~\ref{background} we construct  $W_1=W\cup H$ by gluing W and the homology product cobordism $H$  along the longitudes of  $\eta \subset W$ and $K\subset S^{1}\times S^{2} $, which is the Figure~\ref{c5}. Here we are using the  {\it roping} technique of \cite{a} to draw the handlebody of $W_{1}$. From the discussion above, Figure~\ref{c5} is equivalent to Figure~\ref{c6}. By zero and dot exchanges to $W$ inside $W_{1}$ gives $W_{2}$.The only remaining issue is to put Stein structure on $W_1$ and $W_2$ (they are homeomorphic by Freedman's theorem).

\begin{figure}[ht] 
\begin{center}  
\includegraphics[width=.6 \textwidth]{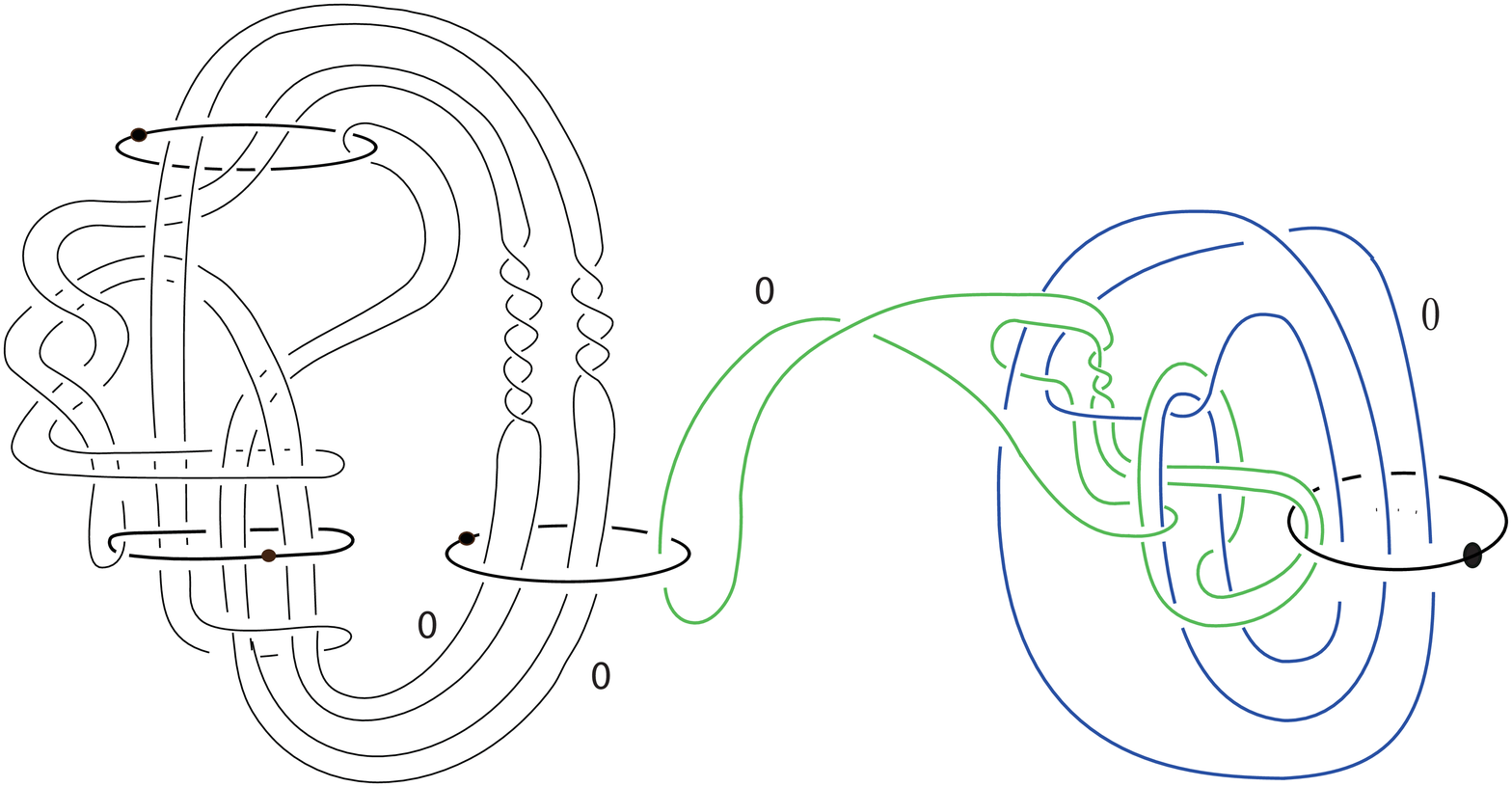}
\caption{$W_{1}$} 
\label{c5}
\end{center}
\end{figure}

\begin{figure}[ht]
\begin{center}  
\includegraphics[width=.5\textwidth]{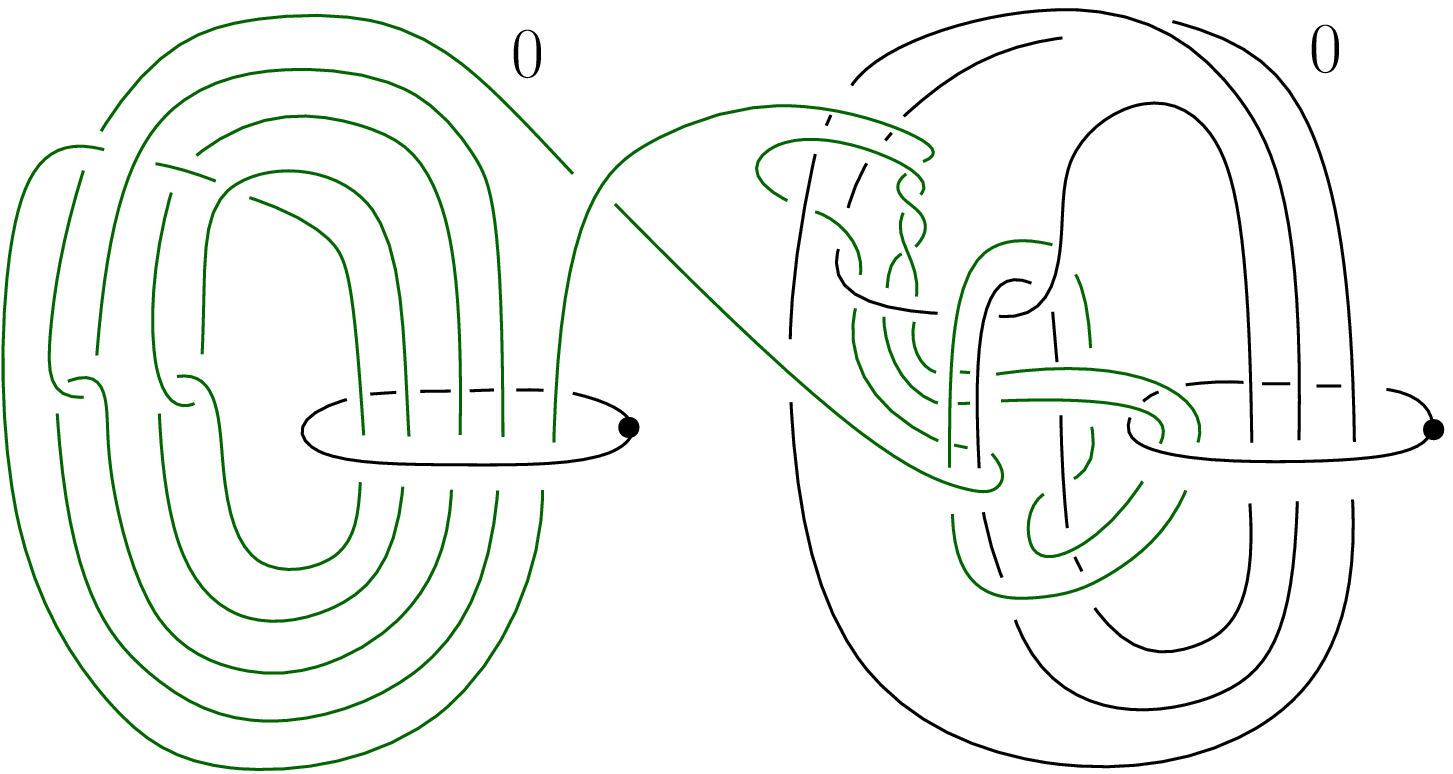} 
\caption{$W_{1}$} 
\label{c6}
\end{center}
\end{figure}

\vspace{05in}

For this we will work with the  picture of  $W_{1}$, given in Figure~\ref{c6}. We first put  Figure~\ref{c1} in Legendrian position and get Figure~\ref{c7}, where the $2$-handle has $tb=4$. 

\vspace{0.3in}

\begin{figure}[ht] 
\begin{center}  
\includegraphics[width=.45\textwidth]{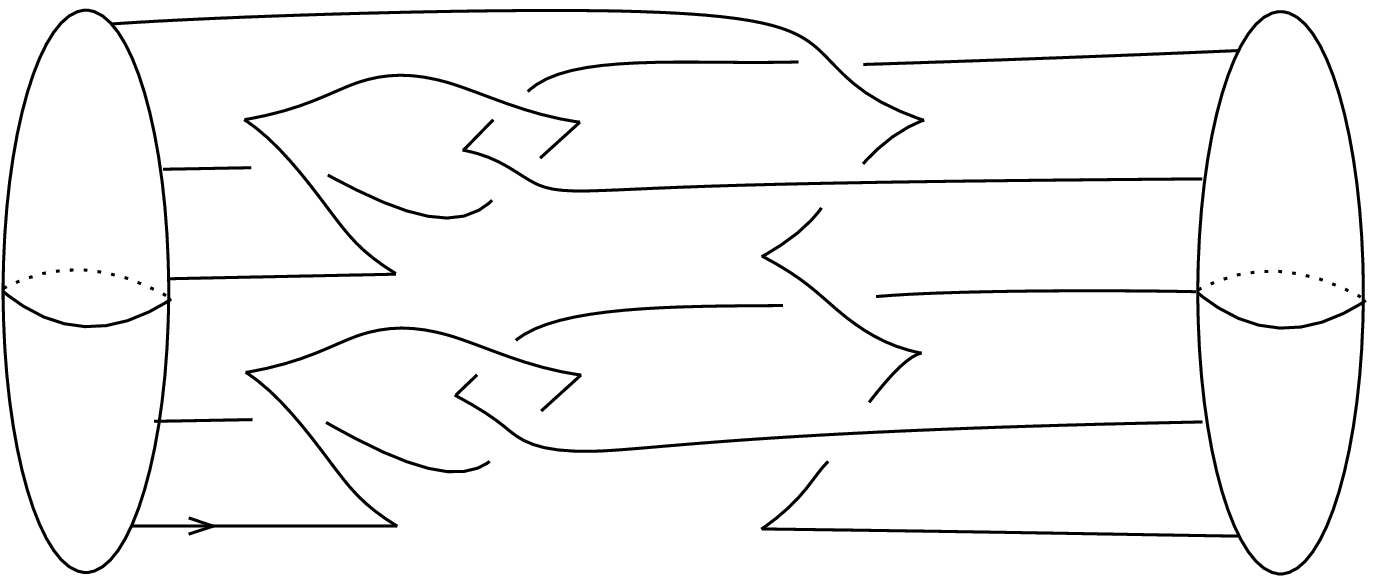}
\caption{tb=4} 
\label{c7}    
\end{center}
\end{figure}

\begin{figure}[ht] 
\begin{center}  
\includegraphics[width=.7\textwidth]{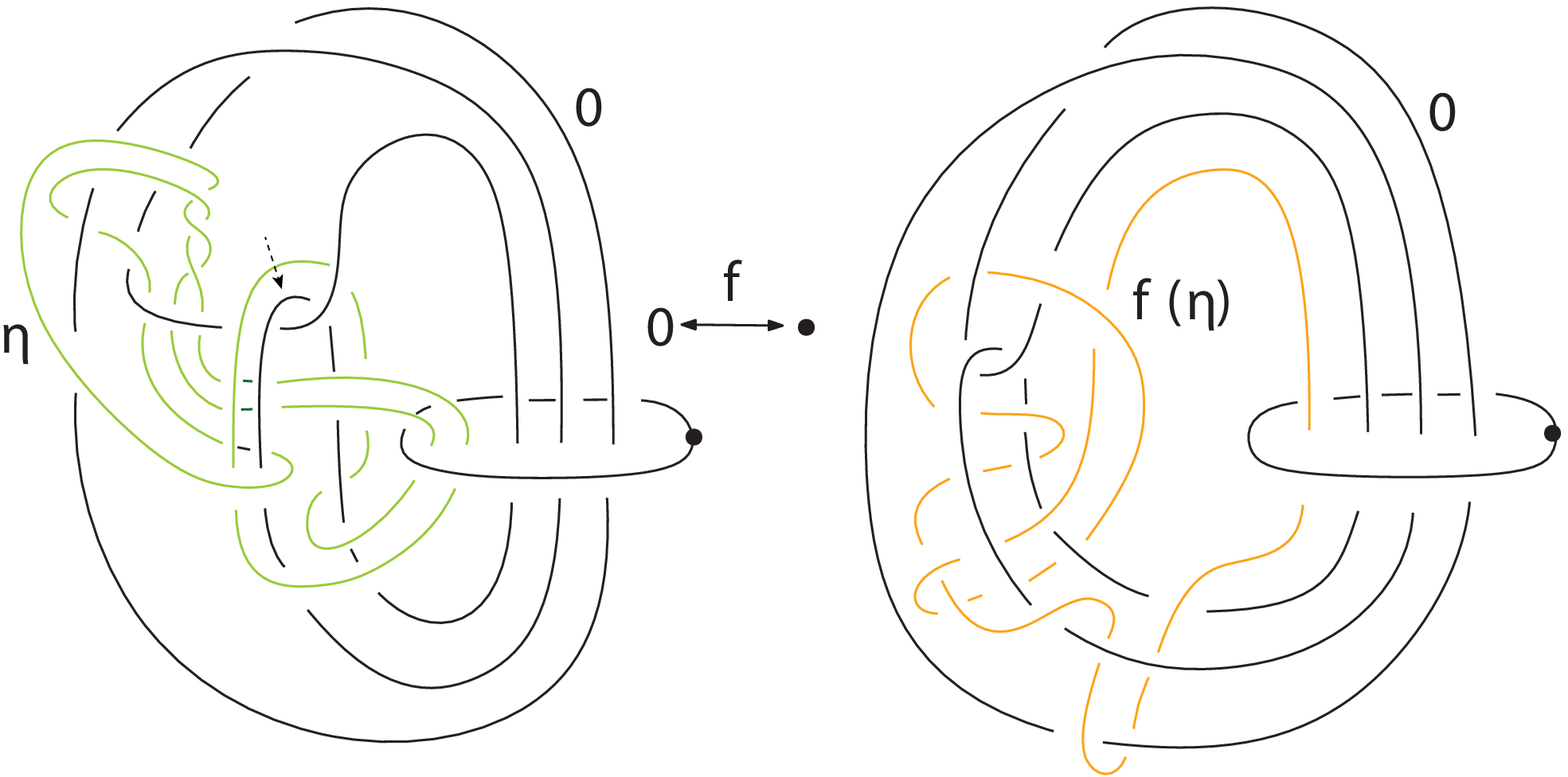}  
\caption{}
\label{c8} 
\end{center}
\end{figure}

The two pictures of  Figure~\ref{c8}  are related to each other by zero and dot exchanges.  Then we put both handlebodies of Figure~\ref{c8} in Legendrian position, and connected sum with Figure~\ref{c7}. This gives Figures~\ref{c11} and \ref{c12}, which are pictures of $W_{1}$ and $W_{2}$ where both are Stein. \qed

\begin{figure}[ht]
\centering
\begin{minipage}{.45\textwidth}
\centering
\includegraphics[width=.85\linewidth]{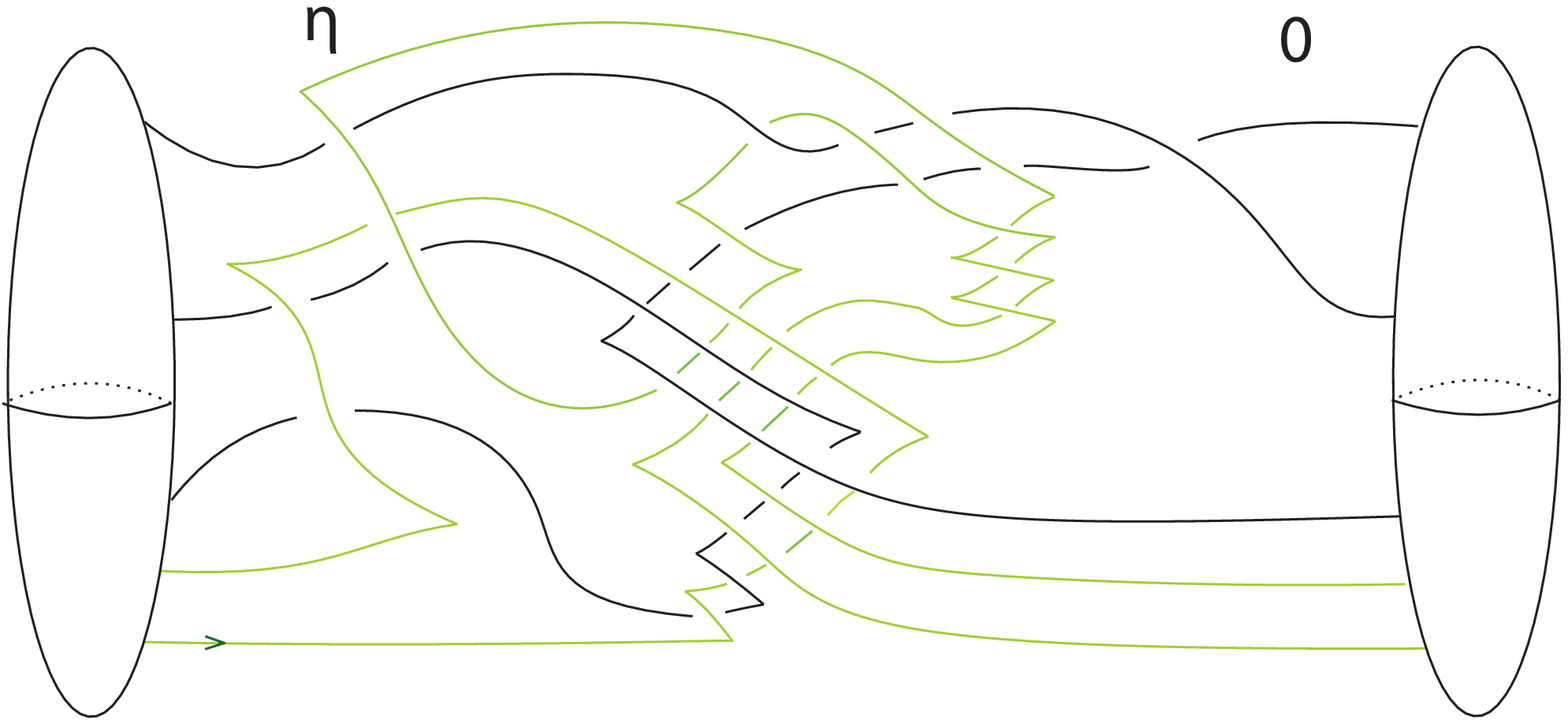}
\caption{$tb(\eta)=-3$}
\label{c9}
\end{minipage}%
\begin{minipage}{.45\textwidth}
\centering
\includegraphics[width=.85\linewidth]{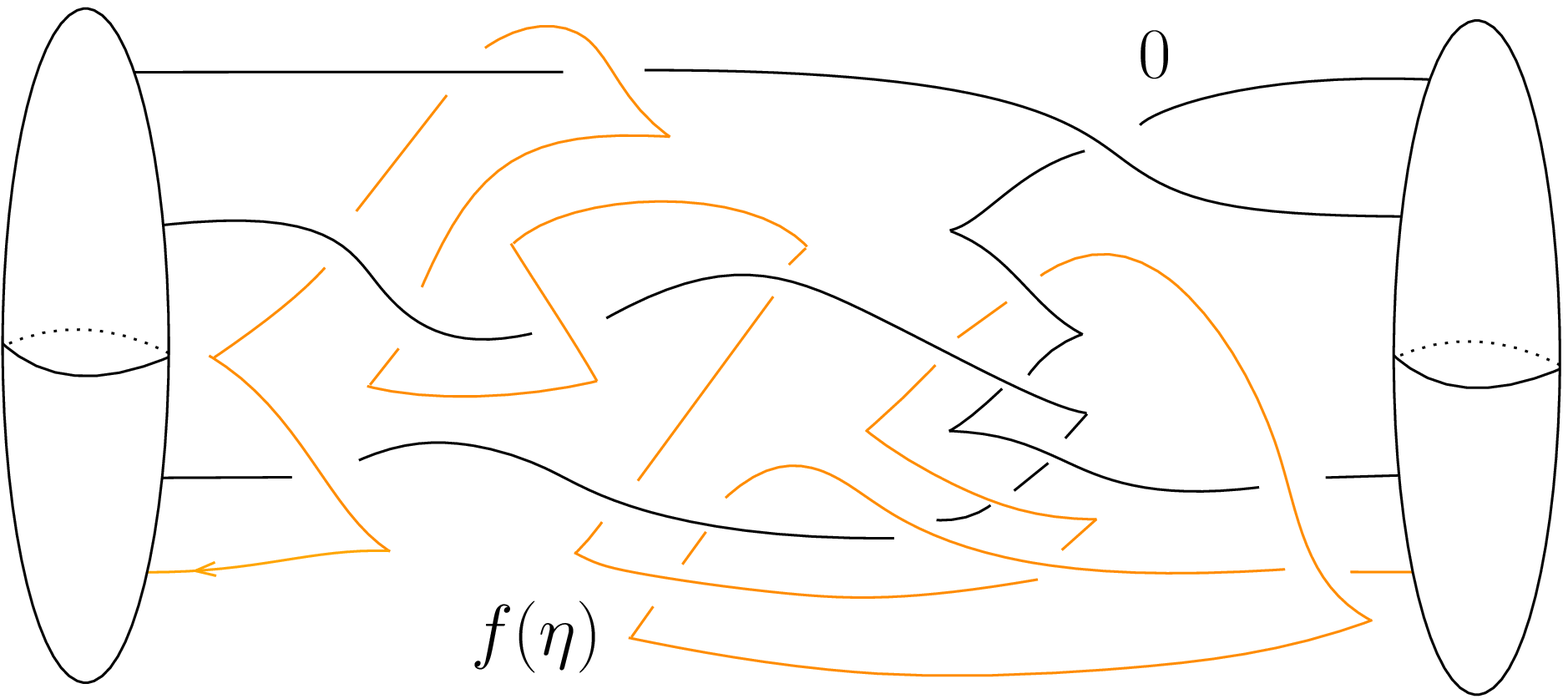}
\caption{$tb(f(\eta))=-3$}
\label{c10}
\end{minipage}
\end{figure}

\begin{figure}[ht]
\begin{center}  
\includegraphics[width=.42\textwidth]{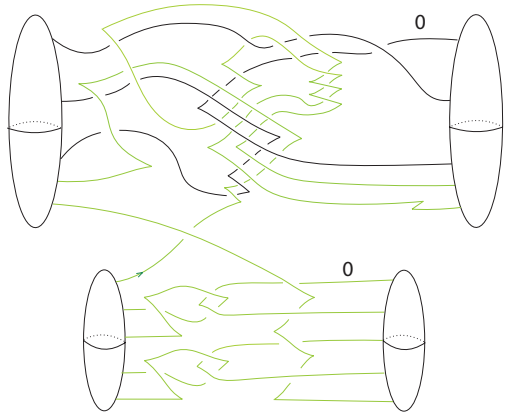} 
\caption{$W_{1}$, as a Stein handlebody} 
\label{c11}
\end{center}
\end{figure}

\begin{figure}[ht]
\begin{center}  
\includegraphics[width=.42\textwidth]{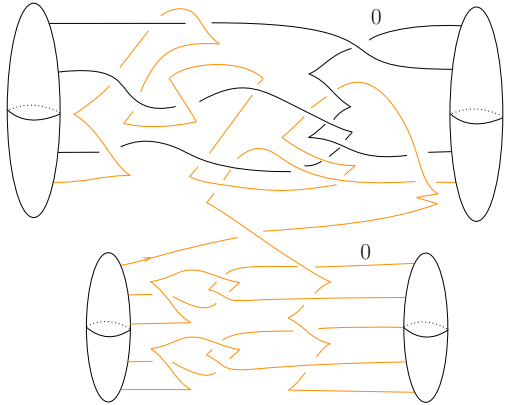}
\caption{$W_{2}$, an exotic copy of $W_{1}$, as a Stein handlebody}
\label{c12}  
\end{center}
\end{figure}

\clearpage

\subsection{Proof of Theorem 2} As above, we will use the construction of \cite{ar} such a way that the end result will be a pair of Stein manifolds, homotopy equivalent to $S^1$, which are absolutely exotic copies of each other. Now recall the construction of \cite{a1} (and \cite{a}): The knot $R\subset S^3$ on the left of Figure~\ref{c13} bounds a ribbon $D^{2}\subset B^{4}$  in two different ways.  Both of these ribbon complements $C^4=B^4 - N(D)$ are diffeomorphic to each other, where $N(D)$ denotes the tubular neighborhood of $D$. Furthermore, $C^4$ is homotopy equivalent to $S^1$, and the diffeomorphism  $f: \partial C \to \partial C$ is described by the involution  of Figure~\ref{c13}   (the ``zero-dot exchange" of the two middle pictures of Figure~\ref{c13}) does not extend to a diffeomorphism $C\to C$, but does extend to a hemeomorphism. Note that $f$ is the anticork involution in 10.2 of \cite{a}. 

\begin{figure}[ht] \begin{center}  
\includegraphics[width=.87\textwidth]{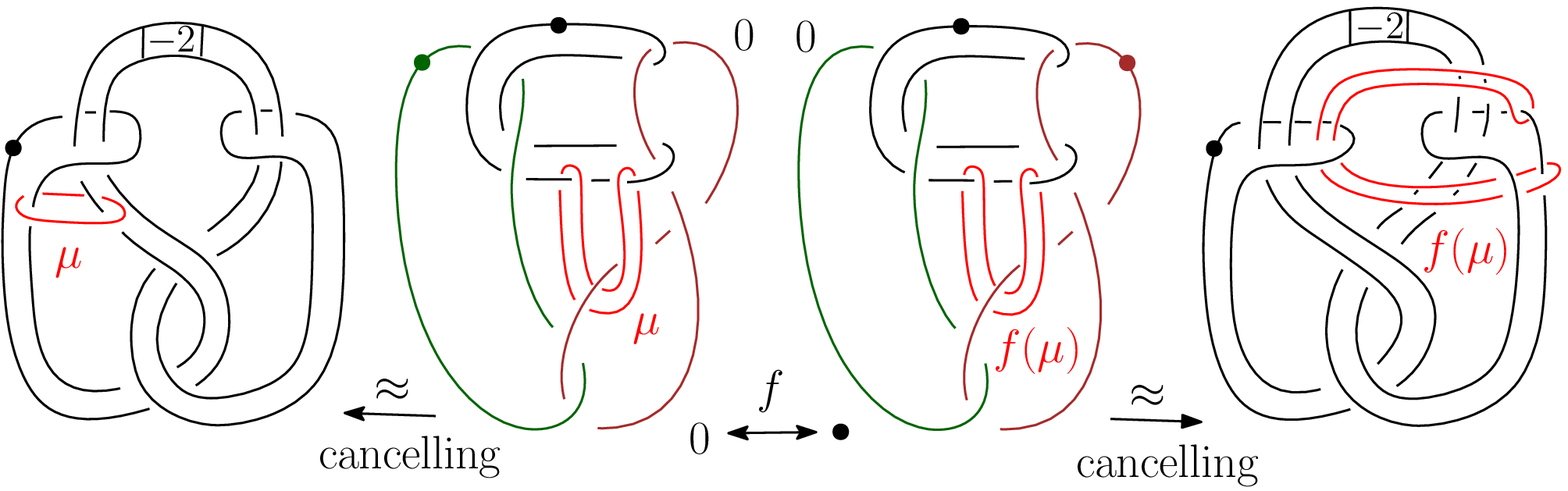}
\caption{$f: \partial C \to \partial C$} 
\label{c13}   
\end{center}
\end{figure}

Next we observe that $C^4$ is a Stein manifold by drawing the first picture of Figure~\ref{c13} as in Figure~\ref{c14} (this was first verified by Luke Williams). Then  pick a knot $\mu\subset \partial C^4$ with hyperbolic complement and trivial symmetry (again by verifying by SnapPy \cite{cdgw}). The first and last pictures of Figure~\ref{c13} becomes  Figures~\ref{c14} and  Figure~\ref{c15}.

\begin{figure}[ht]
\centering
\begin{minipage}{.45\textwidth}
\centering
\includegraphics[width=.75\linewidth]{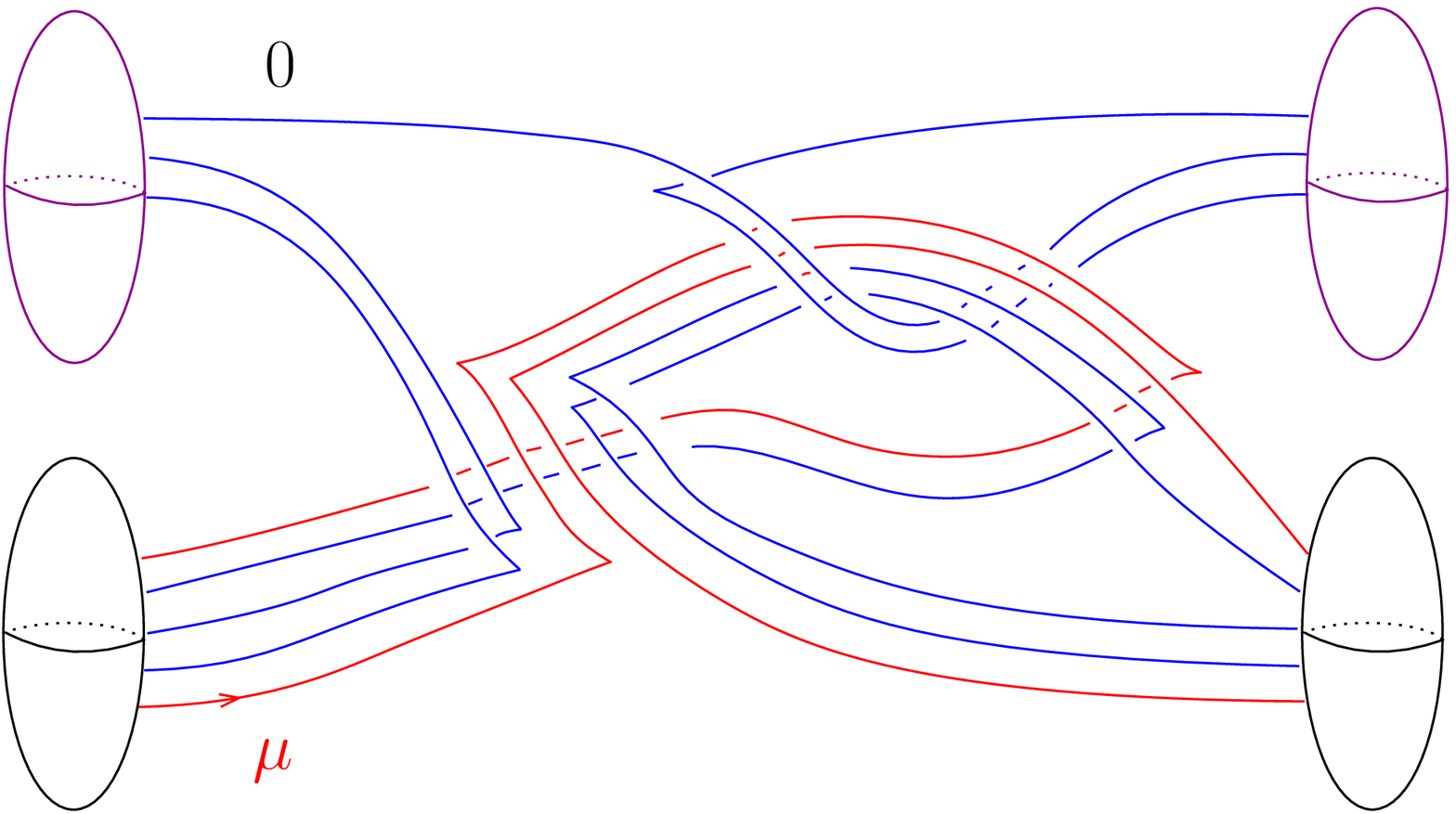}
\caption{$tb(\mu)=-1$}
\label{c14}
\end{minipage}%
\begin{minipage}{.45\textwidth}
\centering
\includegraphics[width=.75\linewidth]{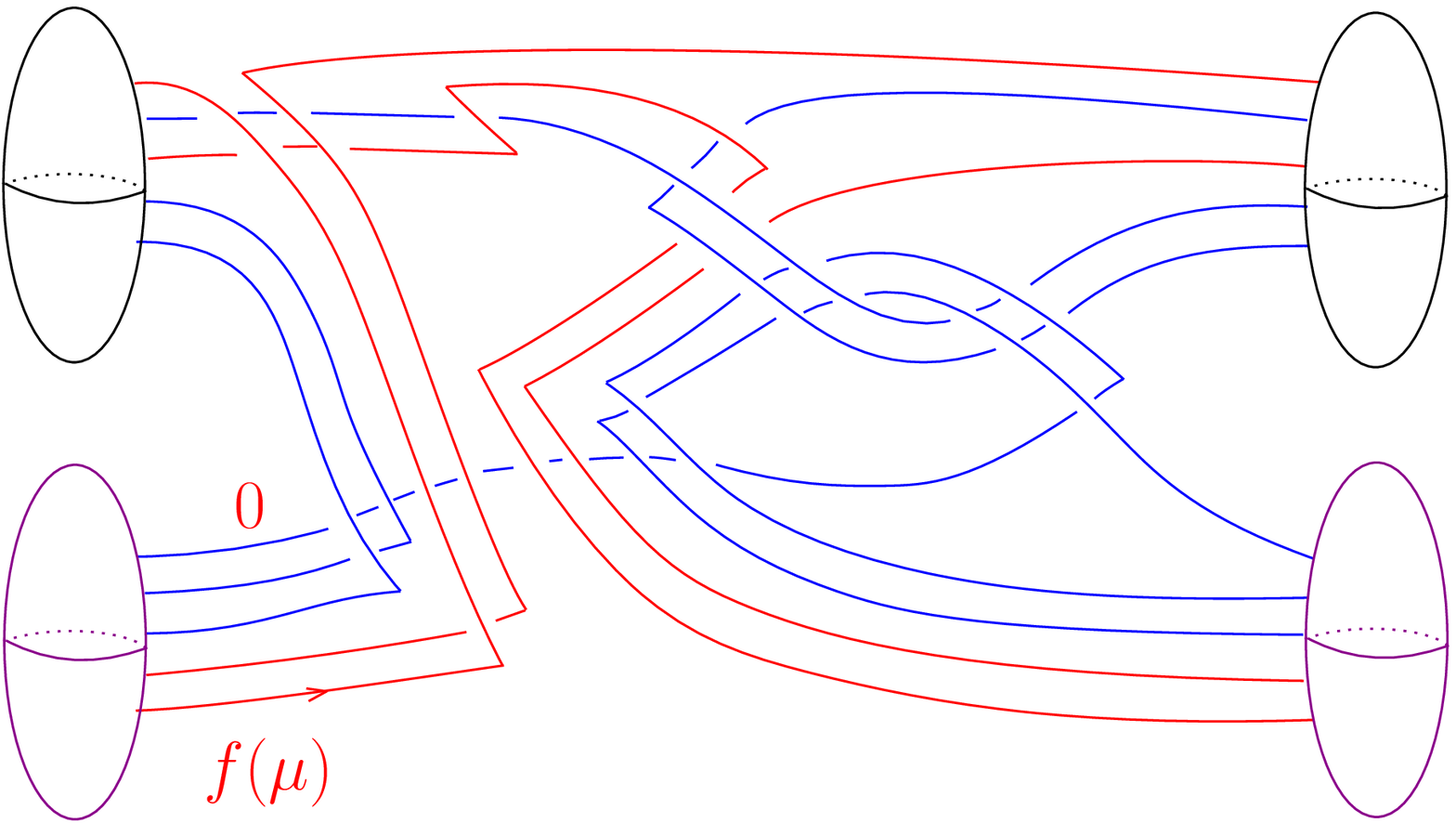}
\caption{$tb(f(\mu))=-3$}
\label{c15}
\end{minipage}
\end{figure}

 Finally by the roping technique of \cite{a}, we glue $H$ to $C$ by first identifying $K$ and $\mu$, then identifying $K$ and  $f(\mu)$ getting the manifolds $Q_1$ and $Q_{2}$ in Figures~\ref{c16} and \ref{c17}. Clearly both are Stein manifolds, and are homeomorphic to each other by Freedman's theorem (calculations $\pi_{1}(Q_{i})\cong\Z$ for $i=1,2$ can be checked from the pictures). \qed

\begin{figure}[ht] \begin{center}  
\includegraphics[width=.43\textwidth]{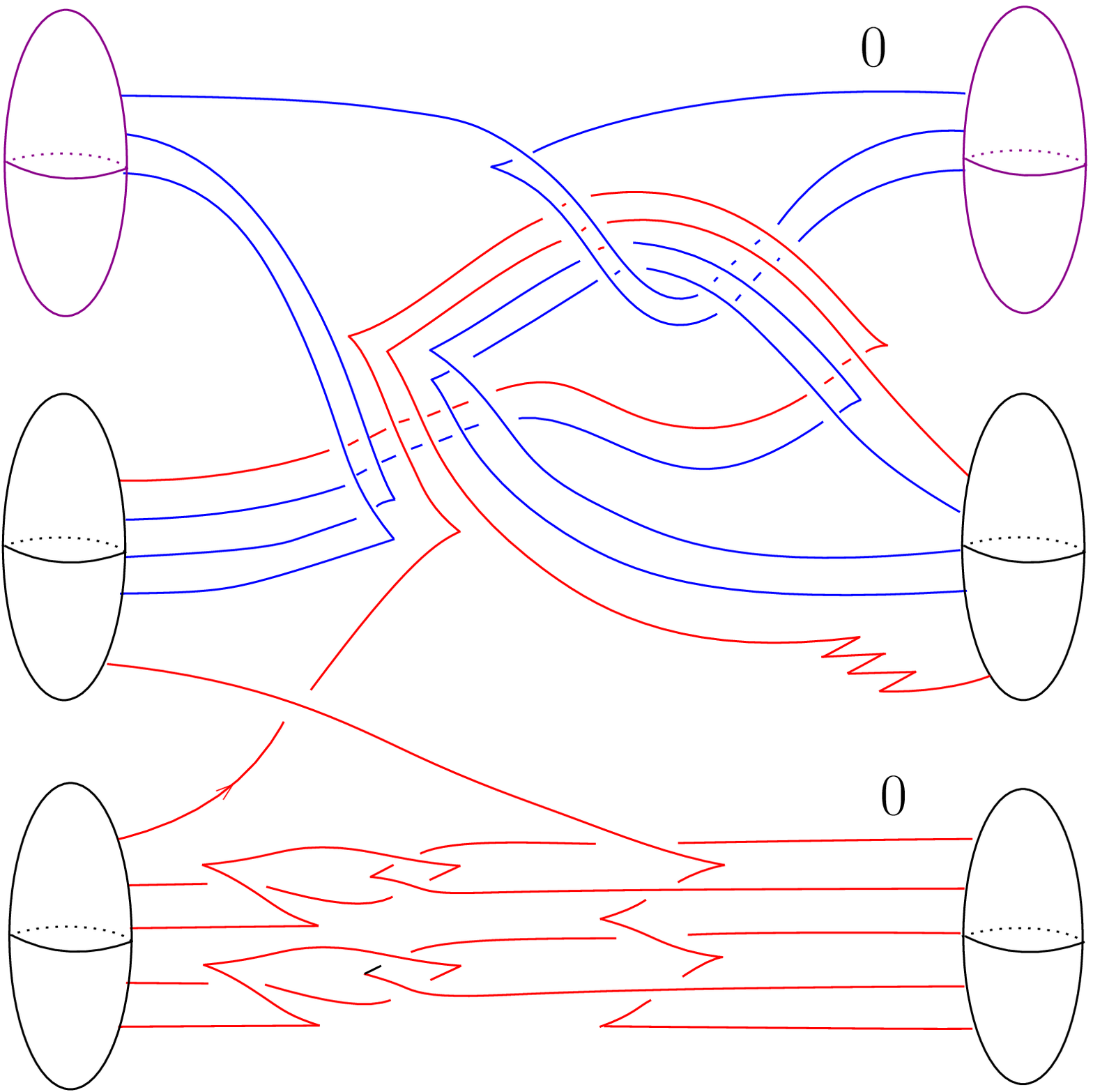}
\caption{$Q_{1}$} 
\label{c16}    
\end{center}
\end{figure}

\begin{figure}[ht] \begin{center}  
\includegraphics[width=.43\textwidth]{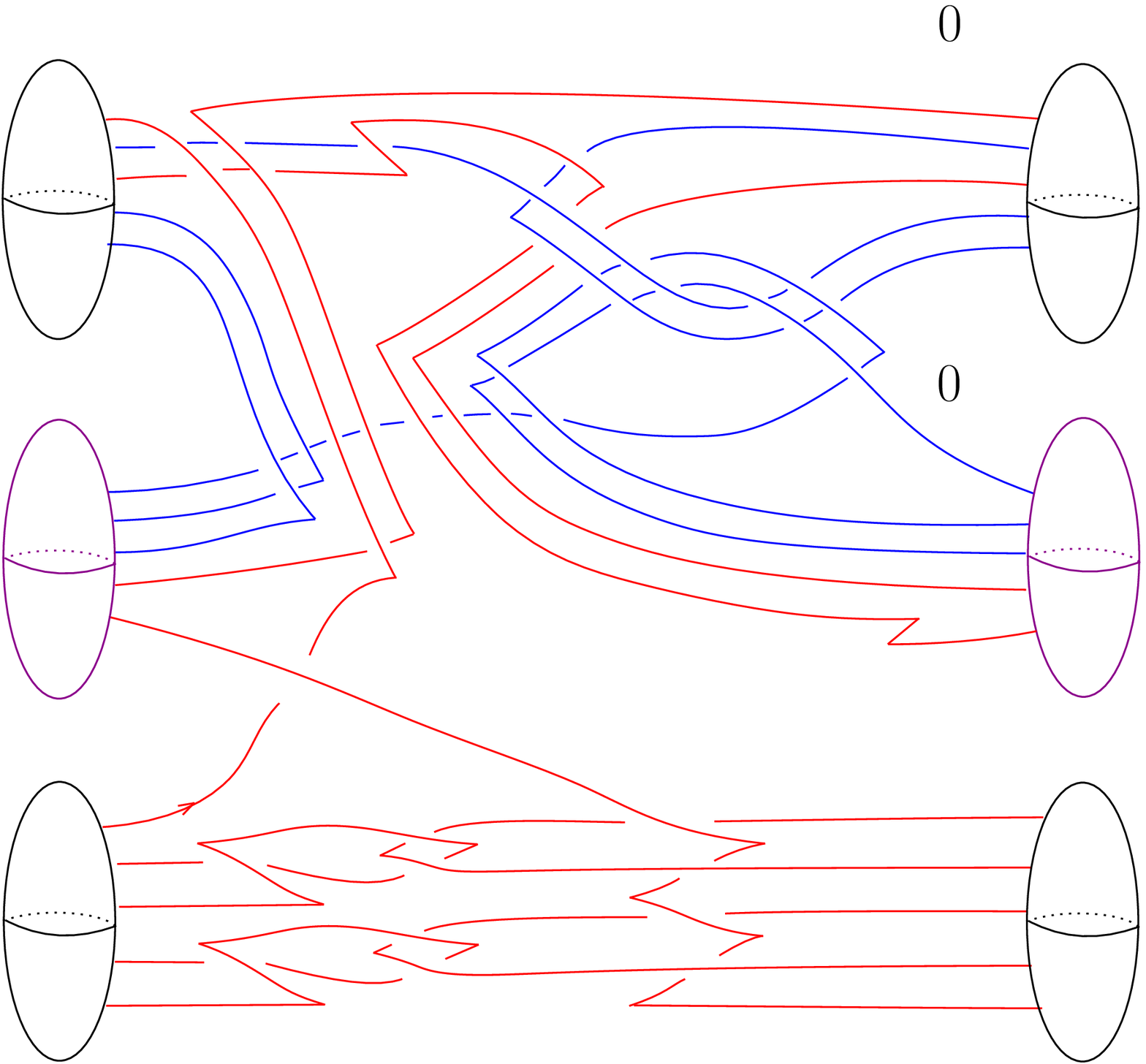}
\caption{$Q_{2}$} 
\label{c17}    
\end{center}
\end{figure}

\newpage

\begin{Rm} Notice each of the contractible manifolds $W_1$ and $W_2$ can be represented by a single 1/2-handle pairs. To see this in Figure~\ref{c12} cancel the 1-handle, at top of the picture, with the $2$-handle which goes through it geometrically once. To see this in Figure~\ref{c11}, we first slide the colored $2$-handle over the other $2$-handle (which corresponds the handle slide indicated by the arrow in the first picture of Figure~\ref{c8}) then the resulting $2$-handle in Figure~\ref{c11} goes through the $1$-handle once. This means that if $\gamma_i$  are the  linking circles of the $2$-handles of $W_i$, for $i=1,2$, there is no diffeomorphism $f:\partial W_1\to \partial W_2$ taking $\gamma_1$ to $\gamma_2$; if there was, we can extend it to a diffeomorphism $W_1\to W_2$ by using ``carving'' (Section 2.5 of \cite{a}). It was pointed out to us that this provides an answer to Problem 1.16 in \cite{k}, because surgering each 
$\partial W_{i}$ along $\gamma_{i}$ results $S^1\times S^2$ (since posting of our paper this fact has also been pointed out in \cite{kp} and \cite{hmp} as well).
\end{Rm}

\noindent{\it Acknowledgement.} First named author would like to thank Danny Ruberman for enjoyable collaboration in \cite{ar}  which was a motivation of this paper. The second named author would like to thank Nathan Dunfield, Nick Ivanov and Kouchi Yasui for valuable conversations.

\end{document}